\newcounter{parentnumber}
\theoremstyle{plain}
\newtheorem{theorem}{Theorem}
\theoremstyle{lemma}
\newtheorem{lemma}{Lemma}
\theoremstyle{proposition}
\theoremstyle{corollary}
\theoremstyle{definition}
\newtheorem{definition}{Definition}
\theoremstyle{remark}
\newtheorem{remark}[theorem]{Remark}
 \title{\bf A Coordinate System for Graphs\\
-\\
\small{A New and Efficient Framework for the Graph Isomorphism Problem}  }
\author{Ameneh Farhadian \\
\small Sharif University of Technology\\[-0.8ex]
\small Tehran, I. R. Iran\\}
\date{ \small Mathematics Subject Classifications:  05C62, 05C50, 05C75}
\begin{document}

\maketitle


\begin{abstract}
In this paper, a function  on any pair of graphs  is defined whose properties are  similar to the properties of dot product in  vector space. This  function enables us to define  graph orthogonality  and, also,  a new metric on isomorphism classes of $n$-vertex graphs. Using dot product of  graphs,  a coordinate system  for graphs is provided which  benefits us in graph isomorphism and related problems.

 \bigskip\noindent
 \textbf{Keywords:}
 graph isomorphism problem; coordinate system of graphs; graph metric; orthogonal graphs; graph dot product.
\end{abstract}

\section {Introduction}

Graph isomorphism problem and its derivatives, such as graph matching \cite{conte2004thirty} (in pattern recognition) and graph similarity problem have numerous
applications in many areas such as biology, chemistry \cite{le2004novel}, pattern recognition\cite{conte2004thirty} and web structure mining \cite{chakrabarti2003mining, dehmer2006perspectives}. Graph matching has been the topic of many studies in computer science over the last decades. In graph matching problem, the goal is to find maximum corresponding regions in the given graphs. The graph isomorphism is in fact an exact graph matching. In the graph similarity problem, the main interest is to assign an overall similarity score to indicate the level of similarity between two graphs \cite{zelinka1975certain}.
Maximum common subgraph \cite{bunke1998graph}, edit distance methods \cite{gao2010survey} and measuring distance based on some operations \cite{faudree1994rotation} are some approaches to compare the similarity of two graphs.
 Another approach is using graph kernels \cite{gartner2003graph,neuhaus2007bridging}.\\
Solving the graph isomorphism problem and related problems is  entirely  based on our perception to graph structure. Therefore,  the main step towards an effective solution for these problems is finding a good framework to represent  graphs.

This paper provides a structural  representation for  graphs. The set of graphs is equipped by a dot product which provides us with a graph coordinate representation. Furthermore, the defined dot product donates a new metric to graph space and, also, introduces graph orthogonality. These facilities provides us with a better intuition about graph structure which benefits us in graph isomorphism problem or graph matching in pattern recognition.

Representing  a graph by its adjacency matrix, completely, depends on the vertex ordering. Hence, a graph finds different presentations, due to different reordering of its vertices. This fact
stimulates us to ask whether it is  possible to have a conceptual
description for graphs which is unique for all isomorphic graphs.
Such representation for graphs should be based on the graph structure
instead of defining the edges states which, extremely,
depends on vertices ordering. A complete set of graph
invariants is  a conceptual description for graphs. Also, graph
spectrum is, roughly, what we want, but it does not specify each
graph isomorphism class, uniquely, and the relation of  structure of a graph and its
spectrum is not very clear. 

A real smooth function $ F:(a,b) \longrightarrow R $  can be represented by defining the value of $F$ on any point of the domain. Another possible representation for $F$ is defining  it  in terms of the basis functions, such as $sin(.)$ and $cos(.)$.
However, it is not always easy to compute the Fourier series presentation,
but it  provides  facilities which makes some
complicated problems trivial. Fourier series representation offers
a conceptual and structural view to  functions which is
the base of some technologies such as optics, telecommunications  and
mechanics engineering (vibration). 

The representation of a function in terms of the basis
functions enables us
to be more strong in dealing with functions, either in
 theory or applications.
Can we develop a similar tool for graphs? The first
questions which, naturally, arise are: How can we define a basis for graphs? How can we measure how much a graph is close to a basis element? In this paper, we try to find an answer for these questions and to define a coordinate representation for graphs.
In the second section, a function is defined on any pair of $n$-vertex graph. The  properties of this function on graphs resemble to  properties of the dot product in vector space. Using this function, the  orthogonality is defined for graphs which reveals the different structure of two graphs.  In the third section, a new metric is defined on isomorphism classes of $n$-vertex graphs. In Section 4, a coordinates system is defined for graphs which benefit us in the graph isomorphism and related problems.
\section{ \bf Dot product}
The set of graphs is not a Hilbert space to have a dot product. But, we need something similar to dot product which enables us
 to define a basis for graphs and, also, to measure how much a graph is close to a basis element. The idea of kernel function embeds the set of graphs in a larger Hilbert space which is equipped with a dot product \cite{riesen2010graph, gartner2003graph,neuhaus2007bridging}.
 Here, we want to define something similar to  a dot product on any pairs of $n$-vertex graphs, directly.



We wish this dot product measures the
structural resemblance of any two graphs.
\begin{remark}
In this paper, the matrix representation of a graph $G$ is an $n \times n$ matrix $ A_{G} $ in which there exists $+1$ in $(i,j)$ entry when $v_{i}$ and $v_{j}$ are adjacent and $-1$ otherwise ($ i \neq j $). Diagonal elements are zero.
\end{remark}
Please note that according to this matrix representation for graphs, we have $A_{\overline{G}}=-A_G$.
The trace and transpose of a matrix $A$ is denoted by $tr(A)$ and $A^T$, respectively.\\
Before defining dot product of graphs, the scaler product for graphs is defined.
\begin{definition}
Let $G$ be a simple graph and $r \in R$.\\ $rG$ is a weighted graph, where the weight $e_{u,v}$ is $+r$ for adjacent $\lbrace u,v\rbrace$ in $G$ and is $-r$, otherwise.
 The matrix representation of $rG$  is $rA_G$ where $A_G$ is the matrix representation of the graph $G$.
\end{definition}

\begin{definition}
Let $ A_{G} $, $ A_{H} $ be the matrix representations of two $n$-vertex graphs $ G $ and $H$, respectively. We define
 $$ G . H:= max_P( tr(A_{G}PA_{H}P^{T}))$$
Where $P$ is a permutation matrix. Let $Phase(G,H)$ be the number of permutation matrices $P$ such  that $G.H= tr(A_{G}PA_{H}P^{T})$.
\end{definition}
Clearly, we have  $1 \leq Phase(G,H) \leq n!$. \\
The above dot product is defined in a natural way. A graph is
permuted to other graph as long as the best placement,
maximum number of edge on edge and not edge on not edge assignment, is
found. Two graphs match exactly, if they are isomorphic. We emphasize that this dot product is not exactly a real dot product. But, it is a function on graphs similar to  a dot product with desired properties. For instance, it provides a metric on graphs.  The following properties are resulted directly from the definition.
%

\begin{lemma}\label{first}
Let $G$ and $H$ be two arbitrary graphs on $n$ vertices  and $\Vert G \Vert^2= G.G$.  We have,

\item [a)] $G.H= H.G$.

\item [b)] $ rG.H=G.rH=r(G.H)$ for any $ r \in R$.
\item [c)]$G.H=\overline{G}.\overline{H}$.
\item [d)]$ G.\overline{H}=\overline{G}. H$.
\item [e)] $\frac {G} {\Vert G \Vert}. \frac{H} {\Vert H\Vert}=1$ if and only if $ G \cong H$ .

\end{lemma}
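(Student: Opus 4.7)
The plan is to dispatch parts (a)--(d) as one-line manipulations of $\operatorname{tr}(A_G P A_H P^T)$, and to concentrate the real work on part (e). For (a), I would apply the cyclic property of the trace together with symmetry of $A_G$, $A_H$ to rewrite $\operatorname{tr}(A_G P A_H P^T) = \operatorname{tr}(A_H P^T A_G P)$, and then note that $P \mapsto P^T$ is a bijection on the set of $n \times n$ permutation matrices, so the two maxima coincide. For (b), the scalar $r$ factors out of the trace; for $r \geq 0$ the maximum commutes with multiplication by $r$, giving $r(G.H)$ directly, and the case $r = 0$ is trivial. For (c) and (d), I would use the identity $A_{\overline{G}} = -A_G$ recorded just after the remark: in (c) the two sign flips inside the trace cancel for every $P$, so the objectives being maximized are literally identical, and in (d) both sides reduce to $\max_P \operatorname{tr}(-A_G P A_H P^T)$.

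For (e), the first step is to compute $\|G\|^2 = G.G$ once and for all. I would read the trace as a Frobenius inner product,
\[
\operatorname{tr}(A_G P A_G P^T) \;=\; \langle A_G,\, P A_G P^T\rangle_F,
\]
and observe that because every off-diagonal entry of $A_G$ is $\pm 1$ and the diagonal vanishes, $\|A_G\|_F^2 = n(n-1)$; this Frobenius norm is invariant under the similarity $X \mapsto P X P^T$. Cauchy--Schwarz then gives $\operatorname{tr}(A_G P A_G P^T) \leq n(n-1)$ with equality attained by $P = I$, so $\|G\|^2 = n(n-1)$ for every $n$-vertex graph. Applying (b) with the positive scalars $1/\|G\|$ and $1/\|H\|$, the hypothesis of (e) is then equivalent to $G.H = n(n-1)$.

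It remains to show $G.H = n(n-1) \iff G \cong H$. If $\sigma\colon V(H) \to V(G)$ is an isomorphism, the corresponding permutation matrix $P$ satisfies $P A_H P^T = A_G$ and the trace equals $n(n-1)$. For the converse, Cauchy--Schwarz applied to $\operatorname{tr}(A_G P A_H P^T) \leq \|A_G\|_F\,\|P A_H P^T\|_F = n(n-1)$ attains equality only when $A_G$ and $P A_H P^T$ are parallel; since every off-diagonal entry of both matrices is $\pm 1$, parallel collapses to equal, yielding the desired isomorphism. The one point requiring care, and really the main obstacle, is precisely this equality analysis: the $\pm 1$ normalization (rather than a $0/1$ adjacency matrix) is exactly what forces the Cauchy--Schwarz equality case to give $A_G = P A_H P^T$ entrywise, rather than mere positive proportionality.
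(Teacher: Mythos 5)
Your proposal is correct and, for parts (a)--(d), matches the paper's proof essentially line for line: cyclicity and symmetry of the trace for (a), factoring the scalar out of the trace for (b), and cancelling or transferring the sign via $A_{\overline{G}}=-A_G$ for (c) and (d). For (e) you take a mildly different but equivalent route. The paper argues directly: from $\max_P \operatorname{tr}(A_GPA_HP^T)=\Vert G\Vert\,\Vert H\Vert$ it forms $\operatorname{tr}\bigl((A_G-PA_HP^T)^2\bigr)=0$ for the optimal $P$ and invokes $\operatorname{tr}(AA^T)=0$ iff $A=0$. You instead compute $\Vert G\Vert^2=n(n-1)$ explicitly from the $\pm1$ off-diagonal entries, apply Cauchy--Schwarz for the Frobenius inner product, and analyze its equality case; since Cauchy--Schwarz for this inner product is proved by exactly the paper's completing-the-square identity, the two arguments are the same calculation in different clothing. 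What your version buys is the explicit observation that every $n$-vertex graph has the same norm $\sqrt{n(n-1)}$ --- a fact the paper uses silently when it replaces $\Vert G\Vert\,\Vert H\Vert$ by $\Vert G\Vert^2$ --- together with a clean justification that the proportionality constant in the equality case must be $+1$ (positivity of the attained value rules out $-1$). One further point in your favour: in (b) you restrict to $r\ge 0$, and that caution is warranted, because for $r<0$ one has $\max_P r\,\operatorname{tr}(A_GPA_HP^T)=r\min_P\operatorname{tr}(A_GPA_HP^T)$, so the claim $rG.H=r(G.H)$ fails in general; the paper's proof only verifies the per-permutation identity $\operatorname{tr}((rA_G)PA_HP^T)=r\,\operatorname{tr}(A_GPA_HP^T)$ and never confronts the maximum, so it does not actually establish (b) for negative $r$ either.
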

\begin{proof}
Let $A_G$ and $A_H$ be the representation matrices of graphs $G$ and $H$, respectively.

\item [a)] Since $tr(A)=tr(A^T)$ and $tr(AB)=tr(BA)$, we have\\
 $tr(A_GPA_HP^T))=  tr((A_GPA_HP^T)^T) =  tr(PA_HP^TA_G)= tr(A_HP^TA_GP)$
\item [b)] $  tr((rA_G)PA_HP^T)= tr(A_GP(rA_H)P^T))=r. tr(A_GPA_HP^T)$.
\item [c)]$tr(A_GPA_HP^T))=  tr((-A_G)P(-A_H)P^T)$
\item [d)]$tr(A_GP(-A_H)P^T))=  tr((-A_G)PA_HP^T)$
\item [e)]  If $ G \cong H$, there exists a permutation matrix $P$ such that $A_G=PA_HP^T$. Thus,
$\frac {G} {\Vert G \Vert}. \frac{H} {\Vert H\Vert}=$ $\frac {1} {\Vert G\Vert \Vert H \Vert}max_Ptr(A_GPA_HP^T)=$ $\frac{1}{\Vert G\Vert^2} max_Ptr(A_GA_G)=1$.\\
If  $\frac {G} {\Vert G \Vert}. \frac{H} {\Vert H\Vert}=1$, then $max_P(tr(A_GPA_HP^T))=\Vert G \Vert. \Vert H \Vert =tr(A_G^2)$. Thus, $2 max_Ptr(A_GPA_HP^T)= tr(A_G^2)+tr(A_H^2)$. Consequently,
$$max_P(tr(A_G^2+A_H^2-2PA_HP^T))=max_P(tr(A_G-PA_HP^T)^2)=0.$$
Since $A_G-PA_HP^T=(A_G-PA_HP^T)^T$, we have
$$max_P(tr(A_G-PA_HP^T)(A_G-PA_HP^T)^T)=0$$.
We know that  $tr(AA^T)={0}$ if and only if $A$ is a zero matrix. Thus, $A_G-PA_HP^T=0$. It means that graph $G$ is isomorphic to graph $H$.

\end{proof}

\begin{definition}We define  the normalized dot product of two graphs $G$ and $H$  as $G \underline{.}H=\frac {G}{\Vert G \Vert} .\frac{H}{\Vert H \Vert}$
\end{definition}
Clearly, we have $-1 \leq G \underline{.} H \leq 1$ . We saw that $ G \underline{.} H = 1$ if and only if $G$ and $H$ are isomorphic. The normalized dot product of graphs on 4 vertices are shown in Table \ref{tbl_modif}.
\begin{table}[ht]

  \centerline{ \includegraphics[width=10.5cm]{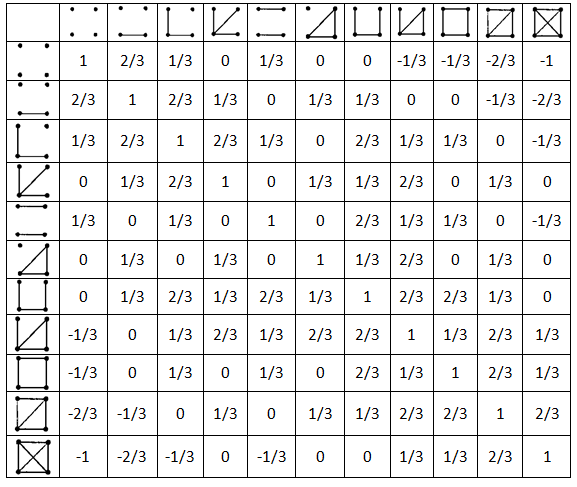}}
  \caption{ \label{tbl_modif} \small The normalized dot product of 4-vertex graphs}
\end{table}


Now, we define graph orthogonality. This new concept offers a new  perception about graph structure. The study of  orthogonal graphs seems to be essential to make our understanding complete about graph structure.

\begin{definition}
Two graphs $ G $ and $ H $ are orthogonal, if $G.H=G.\overline{H}=0$.
\end{definition}
For $n \neq 4k, 4k+1$, it is not possible to have $G.H=0$ and the minimum possible value for $\vert G.H \vert$  is  $ 1$. We call two matrices are quasi-orthogonal, if for two graphs $G$ and $H$ with  $n \neq 4k, 4k+1$, we have   $ G.H=G.\overline{H}=\pm 1$.\\
  If $n= 4k, 4k+1$, the value of zero for dot product of two graphs is possible. Trying to put two orthogonal graphs on another, at  most half of the edge to edge assignments are  successful (edge on edge and not edge on not
edge). 
Two orthogonal graphs are shown in Fig. \ref{ortho}. According to the following lemma, two graphs are
orthogonal if and only if $tr(A_{G}PA_{H}P^{T})$ is invariant to the selection of matrix $P$ and it, constantly, equals to zero.
 Orthogonality of two graphs reveals  perfect different structure of them.
\begin{figure}[ht]
\centerline{\includegraphics[width=5cm]{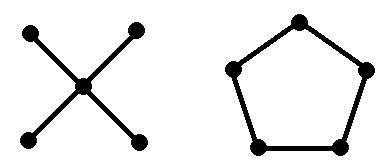}}
\caption{\label{ortho}\small A pair of orthogonal graphs}
\end{figure}
\begin{lemma}  If $G$ and $H$ are orthogonal, then  $Phase(G,H)=n!$.
\end{lemma}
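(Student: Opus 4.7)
The plan is to unpack the definition of orthogonality and observe that it forces the function $f(P) := tr(A_G P A_H P^T)$ to be identically zero on the set of permutation matrices, from which $Phase(G,H)=n!$ follows immediately.

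First I would write out both orthogonality conditions in terms of the max. By definition, $G.H = 0$ means $\max_P f(P) = 0$. For $G.\overline{H}$, I would use the remark that $A_{\overline{H}} = -A_H$, so
\[
G.\overline{H} \;=\; \max_P tr(A_G P (-A_H) P^T) \;=\; \max_P (-f(P)) \;=\; -\min_P f(P).
\]
Thus $G.\overline{H} = 0$ is equivalent to $\min_P f(P) = 0$.

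Combining these, every permutation matrix $P$ satisfies $0 = \min_P f(P) \le f(P) \le \max_P f(P) = 0$, so $f(P) = 0$ for all permutation matrices $P$. In particular, every $P$ attains the maximum value $G.H = 0$, so by definition $Phase(G,H)$ equals the total number of permutation matrices, namely $n!$.

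There is no real obstacle here; the only subtlety is the sign flip coming from $A_{\overline{H}} = -A_H$, which converts the "max of $G.\overline{H}$" condition into a "min" condition for $f$. Once that identification is made, the conclusion is forced by the trivial squeeze between equal min and max.
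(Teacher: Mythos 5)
Your proof is correct and follows essentially the same route as the paper: both use $A_{\overline{H}}=-A_H$ to turn the two orthogonality conditions into $\max_P f(P)=0$ and $\max_P(-f(P))=0$, forcing $f(P)=0$ for every permutation matrix $P$, so every $P$ attains the maximum. Your phrasing via $\min_P f(P)=0$ and a squeeze is just a cosmetic variant of the paper's observation that $f(P)\le 0$ and $-f(P)\le 0$ simultaneously.
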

\textit{Proof}: According to definition, if $G.H=G.\overline{H}=0$, then $max_P(A_GPA_HP^T)=max_P(-A_GPA_HP^T)=0 $. Thus,
 for any permutation matrix $P$,  we have
$$A_GPA_HP^T \leq 0 \text{ and} -A_GPA_HP^T \leq 0$$
It follows that $A_GPA_HP^T=0 $ for any permutation matrix $P$.  $\square$\\

\section{A metric space for graphs}
Some different distances are defined on the set of  isomorphism classes of graphs which only some of them are metrics. For instances, the distances  defined in \cite{zelinka1975certain, bunke1998graph} are metrics which are based on maximum common subgraph. In \cite{lovasz2012large}, a metric is defined for the cut of graphs.
Here, a new metric  on the set of $n$-vertex graphs  is introduced..

\begin{definition} We define for any two $n$-vertex graphs $G$ and $H$.

$$d(G,H):= \Vert G\Vert^{2}+\Vert H \Vert^{2}-2G.H$$ where $\Vert  G \Vert  =\sqrt {G{.}G}$.

\end{definition}
\begin{theorem}$d$ is a metric on the set of isomorphism classes of $n$-vertex graphs.
 \end{theorem}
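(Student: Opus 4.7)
The plan is to verify the four axioms of a metric: non-negativity, identity of indiscernibles, symmetry, and the triangle inequality. A key structural observation, used throughout, is that every $n$-vertex graph $G$ satisfies $\Vert G \Vert^{2} = tr(A_{G}^{2}) = n(n-1)$, since every off-diagonal entry of $A_{G}$ equals $\pm 1$. Hence $d(G,H) = 2n(n-1) - 2\, G.H$ is an affine function of $G.H$, and all $n$-vertex graphs effectively lie on a common ``sphere''.

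Three of the axioms are short. Symmetry $d(G,H)=d(H,G)$ is immediate from Lemma \ref{first}(a). Non-negativity reduces to $G.H \le n(n-1)$, which is Cauchy--Schwarz for the Frobenius inner product applied to $A_{G}$ and $PA_{H}P^{T}$, both of Frobenius norm $\sqrt{n(n-1)}$. The identity-of-indiscernibles axiom reduces to $d(G,H)=0 \iff G.H = n(n-1) \iff G\underline{.}H = 1$, which is exactly Lemma \ref{first}(e).

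The main step is the triangle inequality $d(G,K) \le d(G,H) + d(H,K)$. The plan is to reinterpret $d$ as a Hamming-type distance. For any two symmetric $n \times n$ matrices $A,B$ with zero diagonal and $\pm 1$ off-diagonal entries, an entry-wise count gives $tr(AB) = n(n-1) - 2\,h(A,B)$, where $h(A,B)$ counts the off-diagonal positions on which $A$ and $B$ disagree. Substituting into the definitions of $G.H$ and $d(G,H)$ yields
\[
d(G,H) \;=\; 4 \, \min_{P} \, h\bigl(A_{G},\, P A_{H} P^{T}\bigr),
\]
so $d(G,H)$ is four times the minimum Hamming distance from $A_{G}$ to the orbit of $A_{H}$ under simultaneous row/column permutation. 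Since $h$ is a genuine metric on matrices and conjugation by a permutation preserves $h$, the triangle inequality follows by the standard orbit-metric argument: letting $P^{\star}$ and $Q^{\star}$ achieve the two minima for $(G,H)$ and $(H,K)$ respectively, take $R = P^{\star}Q^{\star}$ as a candidate permutation for $(G,K)$, and bound
\[
h\bigl(A_{G},\, R A_{K} R^{T}\bigr) \;\le\; h\bigl(A_{G},\, P^{\star} A_{H} P^{\star T}\bigr) \;+\; h\bigl(A_{H},\, Q^{\star} A_{K} Q^{\star T}\bigr),
\]
using the triangle inequality for $h$ together with its invariance under common conjugation by $P^{\star}$ on the second argument. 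Multiplying by $4$ gives the desired bound.

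The only non-trivial step is the triangle inequality; once one spots the Hamming reformulation together with the constancy of $\Vert G \Vert^{2}$, the proof reduces to a textbook fact about quotient metrics under group actions. In particular, no square root is required here: because $d$ is literally (up to the factor $4$) an honest Hamming distance descended to the quotient by the permutation action, it inherits the triangle inequality directly, rather than only after taking a square root as would be the case for a generic ``squared distance'' in a Hilbert space.
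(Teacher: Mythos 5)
Your proof is correct and follows essentially the same route as the paper: the paper's triangle-inequality argument is precisely your Hamming reformulation done inline, since it identifies $\frac{1}{4}tr((A_H - PA_{G_1}P^T)^2)$ with the number of disagreeing off-diagonal entries, bounds the disagreements of $PA_{G_1}P^T$ with $QA_{G_2}Q^T$ by the union of the two disagreement sets, and then conjugates by $P^T$ to exhibit the single permutation $S=P^TQ$ witnessing the bound for $d(G_1,G_2)$. Your explicit observation that $\Vert G\Vert^2 = n(n-1)$ is constant (so $d$ is affine in $G.H$) and your packaging of the key step as a quotient metric under the permutation action are clean additions, but they do not change the substance of the argument.
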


\begin{proof} We should check the following properties hold true for any graphs $G, G_{1}, G_{2}$ and $H$
\begin{enumerate}

\item $ d(G, H)\geq 0 $
\item $d(G,H)=0$ if and only if $G\cong H$
\item $ d(G_{1},G_{2}) \leq d(G_{1},H)+d(G_{2},H)$
\end{enumerate}
Please note that

$$d(G,H) =\Vert G\Vert^2+\Vert H \Vert^2-2G.H=\Vert G\Vert^2+\Vert H \Vert^2-2max_P(tr( A_GPA_HP^T))$$
$$=min_P\lbrace tr(A_G^2+A_H^2-2 A_GPA_HP^T) \rbrace=$$
$$=min_P \lbrace tr(A_G^2+A_H^2-A_GPA_HP^T-PA_HP^TA_G) \rbrace$$
Thus, $$d(G,H)=min_P \lbrace tr(A_G-PA_HP^T)^2 \rbrace$$

(1) We know that  $tr(AA^T) \geq 0$ for any real matrix $A$.  Since $d(G,H)=min_P \lbrace tr((A_G-PA_HP^T)(A_G-PA_HP^T)^T) \rbrace$,  we have $d(G,H) \geq 0 $.

(2) We know that $tr(AA^T)=0$, if and only if $A$ is a zero matrix. Thus, $d(G,H)=0$, if and only if  $A_G-PA_HP^T$   is a zero matrix. It means that  there exists a permutation matrix $P$ such that $A_G=PA_HP^T$, i.e. $G$ is isomorphic to $H$.

(3) Let $P$ and $Q$ be the permutation matrices such that
  $H.G_1=tr(A_{H}PA_{G_1}P^T) $  and  $   H.G_2=tr(A_{H}QA_{G_2}Q^T)$.
The possible values for the entries of matrix $C=A_{H}-PA_{G_1}P^T$ are $2$, $-2$ and $0$. We have $c_{i,j}=0$, if $
(A_{H})_{i,j}=(PA_{G_1}P^T)_{i,j}$. If $
(A_{H})_{i,j} \neq (PA_{G_1}P^T)_{i,j}$, then  $c_{i,j}=-2$ or $2$. Thus, $$\frac{1}{4}\sum_{ij} c_{ij}^2=\frac{1}{4}tr(CC^T)=\frac{1}{4}tr((A_{H}-PA_{G_1}P^T)^2)$$ indicates the number of non-zeros entries of $C=A_H-PA_{G_1}P^T$, i.e. the number of entries which $A_H$ is different from $PA_{G_1}P^T$.  Also, $\frac{1}{4}tr((A_{H}-QA_{G_2}Q^T)^2)$ indicates the number of entries which  $A_{H}$  is different from $QA_{G_2}Q^T$.

Clearly, if  $(A_H)_{ij}=(PA_{G_1}P^T)_{ij}$ and  $(A_H)_{ij}=(QA_{G_2}Q^T)_{ij}$, then
 $(PA_{G_1}P^T)_{ij}=(QA_{G_2}Q^T)_{ij}$. In opposite, we have $(PA_{G_1}P^T)_{ij} \neq (Q A_{G_2}Q^T)_{ij}$, if  $(PA_{G_1}P^T)_{ij} \neq (A_H)_{ij}$ or   $(QA_{G_1}Q^T)_{ij} \neq (A_H)_{ij}$. Thus,  the number of  entries of $PA_{G_1}P^T$  which are different from $QA_{G_2}Q^T$ is at most $\frac{1}{4}tr((A_{H}-PA_{G_1}P^T)^2)+\frac{1}{4}tr((A_{H}-QA_{G_2}Q^T)^2)$. In other words,
$$\frac{1}{4}tr((PA_{G_1}P^T-QA_{G_2}Q^T)^2)<\frac{1}{4}tr((A_{H}-PA_{G_1}P^T)^2)+\frac{1}{4} tr((A_{H}-QA_{G_2}Q^T)^2)$$
$$tr(PA_{G_1}P^T-QA_{G_2}Q^T)^2\leq tr(A_{H}-PA_{G_1}P^T)^2+tr(A_{H}-QA_{G_2}Q^T)^2$$
By  manipulating,
$$tr((A_{G_1}-P^TQA_{G_2}Q^TP)^2)\leq tr((A_{H}-PA_{G_1}P^T)^2)+tr((A_{H}-QA_{G_2}Q^T)^2)$$
Clearly, $P^TQ$ is a permutation matrix. Substituting  $P^TQ$ by $S$, we have
$$tr((A_{G_1}-SA_{G_2}S^T)^2)\leq tr((A_{H}-PA_{G_1}P^T)^2)+tr((A_{H}-QA_{G_2}Q^T)^2)$$
According to definition, we have $d(G_1,G_2)=min_P(tr(A_{G_1}-PA_{G_2}P^T)^2)$. Thus, we have $d(G_1,G_2) \leq tr((A_{G_1}-SA_{G_2}S^T)^2 )$ for any permutation matrix $S$. Therefore,

$$d(G_1,G_2) \leq tr((A_{G_1}-SA_{G_2}S^T)^2) \leq d(G_{1},H)+d(G_{2},H) $$\end{proof}

 The study of  topology that  metric $d$  induces on the set of graphs is suggested for the future work.


\subsection{Dot product of graphs with different order}
In Section 2, the dot product of graphs with the same order was defined.
Here, the graph dot product  is extended for graphs with different order. In \cite{ kocay1982some, erdHos1979strong}, the number of subgraph of a graph is,  merely, counted as subgraph algebra. Here, there is a more general approach.
\begin{definition}
Let $ A_{G} $, $ A_{H} $ be, respectively, the matrix representations of two graphs $ G $ and $H$ with respectively $n$ and $k$ vertices such that  $n>k$.  First,  we add extra zero rows and columns  to matrix $A_H$ to have two matrices with the same size, then we define
 $$ G.H:= max_P( tr(A_{G}PA_{H}P^{T}))$$
where $P$ is a permutation matrix.\\
 Assuming that $f$ is a mapping from $V(H)$ to $V(G)$ and  $H_f$ is the induced subgraph on $f(V(H))$,  $Phase(G,H)$  denotes  the number of mapping $f :V(H) \to V(G)$ such that $G.H_f=G.H$.

\end{definition}

\begin{lemma}\label{second}
Let $G$ and $H$ be arbitrary graphs.

\item [a)] ${G} .{H} =H.H $ if and only if $H$ is a subgraph of $G$.
\item [b)]  If $H$ is a subgraph of $G$, then $Phase(G,H)$/Aut($H$) is the number of occurrences of the subgraph $H$ in $G$.

\end{lemma}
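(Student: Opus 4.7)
The plan is to unpack the trace in the definition of $G.H$ as a signed sum of $k(k-1)$ entries, bound it term by term, and characterise the permutations that attain the maximum. After padding $A_H$ with $n-k$ zero rows and columns, let $P$ be the permutation matrix of $\sigma\in S_n$ and set $f(i)=\sigma^{-1}(i)$ for $i=1,\ldots,k$; this $f$ is an injection $V(H)\to V(G)$. The zero block of the padded $A_H$ kills every contribution whose row or column index falls outside $f(\{1,\ldots,k\})$, leaving
\begin{equation*}
\operatorname{tr}(A_G P A_H P^T) \;=\; \sum_{\substack{1\le i,j\le k\\ i\ne j}} (A_G)_{f(i),f(j)}\,(A_H)_{ij}.
\end{equation*}
Since every off-diagonal entry of $A_G$ and of $A_H$ is $\pm 1$, each of the $k(k-1)$ summands is $\pm 1$, so the trace is at most $k(k-1)=\operatorname{tr}(A_H^2)=H.H$. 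I would record this upper bound first.

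For part (a), equality in the bound forces every summand to equal $+1$, i.e.\ $(A_G)_{f(i),f(j)}=(A_H)_{ij}$ for all $i\ne j$. By the paper's matrix convention ($+1$ for an edge, $-1$ for a non-edge), this says precisely that $f$ is a graph isomorphism from $H$ onto the subgraph of $G$ induced on $f(\{1,\ldots,k\})$. Hence $G.H=H.H$ if and only if such an $f$ exists, if and only if $H$ sits in $G$ as an (induced) subgraph.

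For part (b), the analysis above identifies the mappings $f$ counted by $Phase(G,H)$ with the labeled embeddings of $H$ into $G$ as an induced subgraph. I would then argue that two labeled embeddings share the same image $S\subseteq V(G)$ if and only if they differ by precomposition with an element of $\operatorname{Aut}(H)$; so each unlabeled occurrence of $H$ contributes exactly $|\operatorname{Aut}(H)|$ maximizing $f$'s, and the number of occurrences is $Phase(G,H)/|\operatorname{Aut}(H)|$.

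The main thing to handle carefully is the bookkeeping linking a permutation matrix $P$ on $\{1,\ldots,n\}$ to the injection $f$ it determines: permutations that merely shuffle the padded coordinates give the same trace and therefore the same $f$. The definition of $Phase(G,H)$ already collapses this ambiguity by counting $f$'s rather than $P$'s, so once the identification is spelled out the divisibility argument in (b) is clean. A secondary point worth flagging is that ``subgraph'' here must be read as ``induced subgraph'': because non-edges are encoded as $-1$ rather than $0$, any mismatched non-edge lowers the sum exactly as much as a mismatched edge does, and only induced copies of $H$ saturate the bound $H.H$.
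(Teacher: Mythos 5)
Your proof is correct and follows essentially the same route as the paper's: the paper reduces $\max_P \operatorname{tr}(A_GPA_HP^T)$ to $\max_{H_f} H.H_f$ over induced $k$-vertex subgraphs $H_f$ of $G$ and invokes $H.H_f\le H.H$ with equality iff $H_f\cong H$, which is exactly the termwise $\pm 1$ bound you spell out explicitly. Your closing observation that ``subgraph'' must be read as \emph{induced} subgraph is a correct and worthwhile sharpening that the paper leaves implicit (its $H_f$ is defined as an induced subgraph), as is your orbit count of labeled embeddings by $\operatorname{Aut}(H)$ in part (b), which the paper merely asserts.
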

\begin{proof}
\item [a)] It can be easily checked that  $max_P( tr(A_{G}PA_{H}P^{T}))= max_{H_f}( H.H_f )$ where $ H_f$ is a $k$-vertex subgraph of graph $G$. Thus, the maximum value of $G.H$ occurs if and only if there exists a subgraph $H_f$  in $G$ isomorphic to $H$.
In opposite, if there is a subgraph $H_f$ isomorphic to $H$ in $G$, then $G.H=H.H_f=H.H$.
\item [b)] If $H$ is a subgraph of graph $G$, we have $Phase(G,H)= $
 $$ \vert \lbrace H_f \vert \text{$H_f$ is a $k$-vertex subgraph of $G$ isomorphic to $H$} \rbrace  \vert . \vert Aut(H) \vert$$
\end{proof}
\section{Coordinate Representation }
We saw how graph dot product  reveals significant information about the structure of a pair of graphs, such as being isomorphic, close to  isomorphic or completely different structure by orthogonality. Now, we want to identify a graph according to its dot product by a a set of graphs.
\begin{definition}
Let $S=(H_{1},..,H_{t})$ be an ordered set of graphs.
The \textit{coordinates of a graph $G$} with respect to the set $S$ is
$(G.H_{1},\cdots,G.H_{t})$.
\end{definition}
Please note that for any graph $H_i$, the $Phase (H_i,G)$ is also computed.
\begin{definition}
Let $\Gamma$ be a set of graphs.
A set $S$ of graphs is a \textit{basis} for $\Gamma$, if
 any graph in $\Gamma$ has a unique  coordinates with respect to $S$.
 \end{definition}
\begin{figure}[ht]
\centerline{\includegraphics[width=9cm]{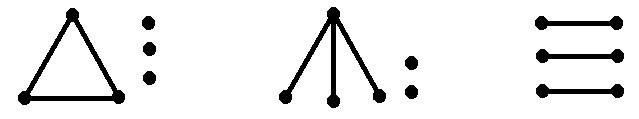}}
\caption{\label{f}\small A basis for 6-vertex graphs}
\end{figure}
A set of graphs is a basis for itself. A basis for 6-vertex graphs is shown in Fig. \ref{f}. The coordinate representation of graphs is a useful tool to deal with graphs. Two isomorphic graphs share the same coordinates. Thus, to test the isomorphism of two graphs, it is sufficient  to check their coordinates. Also, the coordinates of graphs indicate how similar two graphs are.

In the conventional  approach for checking the isomorphism of two graphs, one tries to find a one to one correspondence between the vertices of two graphs. Against, in a coordinate system, it is sufficient to compute the coordinates and compare them. 

Although, computing the dot product of two graphs, in general, is as hard as graph isomorphism problem, but, the basis is a fixed set of graphs. The fixedness of basis  is the outstanding advantage of this new approach. The fixedness of basis elements  makes it possible to have some pre-computations, if it is needed, or  have a dedicated physical  infrastructure which computes the dot product of any graphs with a fixed basis element. The implementation of this infrastructure  is given in Fig. \ref{hardware}.\\
 More importantly, the basis elements can be chosen cleverly to decrease the computational complexity. For example, there are families of graphs that the computation of their dot product  to any arbitrary graph can be done in  polynomial time, such as  bounded order graphs, star graphs $S_k$,  $K_r \cup K_{n-r}$ or $K_r \cup \overline{K}_{n-r}$($r$ is a fixed integer).\\
 To study the computational complexity of the graph isomorphism problem, we should find a suitable basis for graphs. For instance, if we can find a basis for $n$-vertex graphs whose cardinality is a polynomial in terms of $n$ and the  dot product  of any graph with basis elements can be done in polynomial time in terms of $n$, in fact,  we have found a polynomial time algorithm for the graph isomorphism problem.

\begin{figure}[ht]
\centerline{\includegraphics[width=10cm]{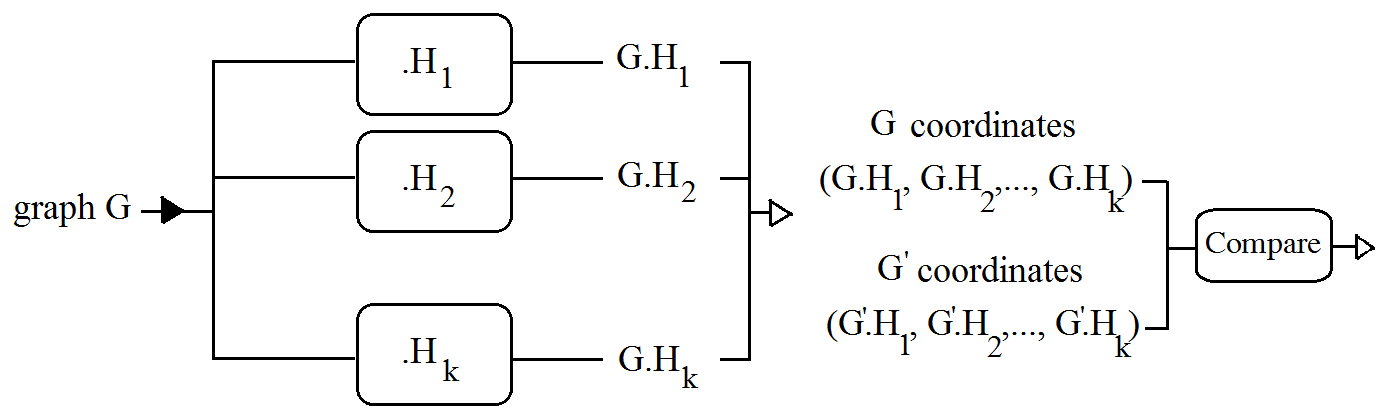}}
\caption{\label{hardware}\small A framework for solving the
graph isomorphism problem or graph matching ($H_{i}$ s  are basis elements )}
\end{figure}

Another application of the defined graph coordinate system is graph clustering, i.e. classifying a set of graphs according to their class of isomorphism. Assume we have a set of $m$ graphs with $n$ vertices. We want to classify them into isomorphism classes.
We need to compare any pair of graphs. Hence, the algorithm of checking isomorphism of two graphs should be called  $O(m^2)$ times. In opposite, using the graphs coordinates, it is sufficient to compute the graph coordinates and compare them. Thus, clustering of graphs which needs $O(m^2)$ comparison of graphs reduces to a sorting problem with $O(\log(m))$ time complexity . \\
The graph coordinate representation is useful not only in graph isomorphism problem, but also in graph matching and graph similarity problems. The closer coordinates, the more similar structure. Therefore, the coordinate system of graphs, also, benefits us in classifying and clustering graphs in inexact cases.

\subsection{ A basis for almost all $n$-vertex graphs }
 In the probability space of graphs on $n$ labeled vertices in which the edges are chosen independently, with probability $p=1/2$, we say that almost every graph $G$ has a property $Q$ if the probability that $G$ has $Q$ tends to 1 as $n \rightarrow \infty$.\\
It has been shown that almost every $n$-vertex graph is, uniquely,  determined by the number of occurrence of its $3 \log_2{n}$-vertex subgraphs \cite{farhadian2018almost}.
 According  to Lemma \ref{second}, $G.H$ indicates whether $H$ is a subgraph of $G$. Additionally, if $H$ is a subgraph of $G$, the number of  copies of  subgraph $H$ occurred in $G$ can be obtained from $Phase(G,H)$.
Therefore, the set of graphs with $3\log_2{n}$ vertices  is a basis for almost all $n$-vertex graphs.
%
%
%
%


\begin{thebibliography}{10}

\bibitem{bunke1998graph}
Horst Bunke and Kim Shearer.
\newblock A graph distance metric based on the maximal common subgraph.
\newblock {\em Pattern recognition letters}, 19(3):255--259, 1998.

\bibitem{chakrabarti2003mining}
Soumen Chakrabarti.
\newblock {\em Mining the Web: Discovering knowledge from hypertext data}.
\newblock Morgan Kaufmann, 2003.

\bibitem{conte2004thirty}
Donatello Conte, Pasquale Foggia, Carlo Sansone, and Mario Vento.
\newblock Thirty years of graph matching in pattern recognition.
\newblock {\em International journal of pattern recognition and artificial
  intelligence}, 18(03):265--298, 2004.

\bibitem{dehmer2006perspectives}
Matthias Dehmer, Frank Emmert-Streib, and Olaf Wolkenhauer.
\newblock Perspectives of graph mining techniques.
\newblock {\em Rostocker Informatik Berichte}, 30(2):47--57, 2006.

\bibitem{erdHos1979strong}
Paul Erd{\H{o}}s, L{\'a}szl{\'o} Lov{\'a}sz, and Joel Spencer.
\newblock Strong independence of graphcopy functions.
\newblock {\em Graph Theory and Related Topics}, pages 165--172, 1979.

\bibitem{farhadian2018almost}
Ameneh Farhadian.
\newblock Almost every n-vertex graph is determined by its 3log(n)-vertex
  subgraphs.
\newblock {\em arXiv preprint arXiv:1805.05387}, 2018.

\bibitem{faudree1994rotation}
Ralph~J Faudree, Richard~H Schelp, L~Lesniak, Andr{\'a}s Gy{\'a}rf{\'a}s, and
  Jen{\"o} Lehel.
\newblock On the rotation distance of graphs.
\newblock {\em Discrete Mathematics}, 126(1):121--135, 1994.

\bibitem{gao2010survey}
Xinbo Gao, Bing Xiao, Dacheng Tao, and Xuelong Li.
\newblock A survey of graph edit distance.
\newblock {\em Pattern Analysis and applications}, 13(1):113--129, 2010.

\bibitem{gartner2003graph}
Thomas G{\"a}rtner, Peter Flach, and Stefan Wrobel.
\newblock On graph kernels: Hardness results and efficient alternatives.
\newblock In {\em Learning Theory and Kernel Machines}, pages 129--143.
  Springer, 2003.

\bibitem{kocay1982some}
WL~Kocay.
\newblock Some new methods in reconstruction theory.
\newblock In {\em Combinatorial Mathematics IX}, pages 89--114. Springer, 1982.

\bibitem{le2004novel}
Si~Quang Le, Tu~Bao Ho, and TT~Hang Phan.
\newblock A novel graph-based similarity measure for 2d chemical structures.
\newblock {\em GENOME INFORMATICS SERIES}, 15(2):82, 2004.

\bibitem{lovasz2012large}
L{\'a}szl{\'o} Lov{\'a}sz.
\newblock {\em Large networks and graph limits}, volume~60.
\newblock American Mathematical Soc., 2012.

\bibitem{neuhaus2007bridging}
Michel Neuhaus and Horst Bunke.
\newblock {\em Bridging the gap between graph edit distance and kernel
  machines}.
\newblock World Scientific Publishing Co., Inc., 2007.

\bibitem{riesen2010graph}
Kaspar Riesen and Horst Bunke.
\newblock {\em Graph classification and clustering based on vector space
  embedding}.
\newblock World Scientific Publishing Co., Inc., 2010.

\bibitem{zelinka1975certain}
Bohdan Zelinka.
\newblock On a certain distance between isomorphism classes of graphs.
\newblock {\em {\v{C}}asopis pro p{\v{e}}stov{\'a}ni matematiky},
  100(4):371--373, 1975.

\end{thebibliography}

\end{document}